\newtheorem{defi}{Definition}[section]
\newtheorem{lem}[defi]{Lemma}
\newtheorem{prop}[defi]{Proposition}
\newtheorem{thm}[defi]{Theorem}
\newtheorem{rem}[defi]{Remark}
\newtheorem{cor}[defi]{Corollary}
\title{\LARGE \bf A Risk-Aware Control: \\Integrating Worst-Case CVaR with Control Barrier Function}
\author{Masako Kishida, \IEEEmembership{Senior Member, IEEE}
\thanks{This work was supported by JST, PRESTO Grant Number JPMJPR22C3, Japan.}
\thanks{M. Kishida is with the National Institute of Informatics, Tokyo, Japan (e-mail: kishida@nii.ac.jp). }
}
\begin{document}

\maketitle
\thispagestyle{empty}
\pagestyle{empty}

\begin{abstract}
This paper proposes a risk-aware control approach to enforce safety for discrete-time nonlinear systems subject to stochastic uncertainties.
We derive some useful results on the worst-case Conditional Value-at-Risk (CVaR) and define a discrete-time risk-aware control barrier function using the worst-case CVaR.
On this basis, we present optimization-based control approaches that integrate the worst-case CVaR into the control barrier function, taking into account both safe set and tail risk considerations. 
In particular, three types of safe sets are discussed in detail: half-space, polytope, and ellipsoid. It is shown that control inputs for the half-space and polytopic safe sets can be obtained via quadratic programs, while control inputs for the ellipsoidal safe set can be computed via a semidefinite program. 
Through numerical examples of an inverted pendulum, we compare its performance with existing methods and demonstrate the effectiveness of our proposed controller.
\end{abstract}

\section{Introduction}\label{sec:intro}

Safety-critical systems such as autonomous vehicles, aerospace vehicles, and medical devices require impeccable reliability due to the potentially catastrophic consequences of their failure. As these systems become more prevalent, ensuring their consistent safety in the face of uncertainty becomes more important and challenging. 

Control Barrier Functions (CBFs) \cite{WieA07} have been increasingly used to ensure the safety of various systems such as robotics \cite{RauKH16,AgrS17}, \cite{WanAE17}, spacecraft \cite{BreP22}, and automotive systems \cite{XuGT18, SeoLB22}.
Essentially, the CBF provides guarantees that the system's state will remain within a given safe set for all future times, given that the control input satisfies certain conditions, allowing us to design control systems that satisfy safety requirements.
While early CBF-based control approaches rarely addressed uncertainties,
 recent studies have begun to consider them \cite{NguS16,XuTG15,KolA19,TayA20,SanDC21,Cla21,WanMS21,ChoCT20,CosCT23}.
For example, bounded model uncertainties are considered in the control-Lyapunov-function control-barrier-function quadratic program in \cite{NguS16},
parametric model uncertainties are considered by introducing adaptive CBF in \cite{TayA20}, and
stochastic uncertainties are considered and treated using expectations in CBF \cite{CosCT23}, just to name a few.
However, these approaches may still have limited applicability in safety-critical systems. They overlook risks that could lead to severe consequences when safety-critical systems operate under uncertainty.

To minimize the risk that results in a severe loss, the development of advanced risk-aware control approaches becomes essential. 
The Conditional Value-at-Risk (CVaR) \cite{RocU00,ZymKR13-b} is a risk measure that is defined as the conditional expectation of losses exceeding a certain threshold, thus quantifying the tail risk. CVaR was first used in the finance \cite{RocU00, ZhuF09}, and now it is also used in the controls  \cite{SamY18,MulMS18,ChaL22,HakY22,SurM22}.
However, the computation of CVaR requires the exact knowledge of the uncertainty probability distribution, which may limit its practical use.
The worst-case CVaR  \cite{ZhuF09, RocU00, HuaZFF08}, on the other hand, does not require the exact knowledge of the uncertainty probability distributions, but considers the maximum risk over a set of possible uncertainty distributions, which enhances its applicability in practice. Moreover, it is known that the computation of the worst-case CVaR can be expressed as a quadratic problem for common cases  \cite{ZymKR13-b}, \cite{ZymKR13}. 
 The use of the worst-case CVaR in control problems can be seen, for example, in \cite{GavHK12,JinX21, KisC23, Kis23}.

The primary objective of this paper is to design a risk-aware control approach to enforce safety for discrete-time nonlinear systems subject to stochastic uncertainties. This is achieved by integrating the worst-case CVaR into the control barrier function.
The main contributions of the paper are threefold:
1) The derivation of useful results related to the worst-case CVaR,
2) The introduction of a discrete-time risk-aware control barrier function definition using the worst-case CVaR, and
3) The formulation of optimization-based control approaches that integrate the worst-case CVaR into CBF for three types of safe sets; half-space and polytopic (as quadratic programs) and ellipsoidal (as a semidefinite program).
A paper that is closely related to this paper is \cite{SinAA23}, which presents a higher-level discussion of risk-aware CBF along with specific examples of expectation and CVaR as risk measures. 
Our paper focuses on the use of the worst-case CVaR and provides concrete computational formulation for implementation.
Thus, the main contributions mentioned above are unique to our paper.

The remainder of the paper is organized as follows. 
Section \ref{sec:pre} presents  the notation and definitions and results on the worst-case CVaR. 
After showing the system model we consider in Section \ref{sec:model}, 
 Section \ref{sec:CBF} presents the definition of discrete-time risk-aware control barrier function and obtains the safety constraints to be satisfied by the control input for three types of safe sets, 
 which is followed by the controller design in Section \ref{sec:cont}.
The performance of these proposed controllers is illustrated and compared with an existing standard controller in Section \ref{sec:ex}, and 
 Section \ref{sec:conc} concludes the paper.


\section{Preliminaries}\label{sec:pre}

\subsection{Notation}
The sets of real numbers, real vectors of length $n$, and real matrices
of size $n \times m$ are denoted by $\mathbb{R}$, $\mathbb{R}^n$,  and $\mathbb{R}^{n\times m}$, respectively. 
For $M\in \mathbb{R}^{n\times n}$, $M \succ 0$ indicates $M$ is positive definite. 
$M^\top$ denotes the transpose of a real matrix $M$ and $\text{Tr}(M)$ denotes the trace of $M$.
$I_n$ denotes the identity matrix of size $n$. 
For a vector $v\in \mathbb{R}^n$, $\|v\|$ denotes the Euclidean norm. 
\subsection{Conditional Value-at-Risk}
Let $\mu \in\mathbb{R}^n$ be the mean and $\Sigma \in\mathbb{R}^{n\times n}$ be the covariance matrix of the random vector  $\xi \in\mathbb{R}^n$ under the true distribution $\mathbb{P}$, which is the probability law of $\xi$. 
Thus, it is implicitly assumed that the random vector $\xi$ has finite second-order moments.
Let $\mathcal{P}$ denote the set of all probability distributions on $\mathbb{ R}^n$ that have the same first- and second-order moments as $\mathbb{P}$, i.e.,
\begin{align*} 
 \mathcal{P}= \left\{\mathbb{P} : \mathbb{E}_{\mathbb{P}}\left[ \begin{bmatrix}\xi_i\\ 1\end{bmatrix}\begin{bmatrix}\xi_j\\ 1\end{bmatrix}^{\!\top} \right]
=  \begin{bmatrix}  \Sigma  \delta_{ij}  & 0 \\ 0^\top & 1 \end{bmatrix}, \forall i, j\right\}.
\end{align*}
Here $\delta_{ij}$ denotes the Kronecker delta and $\mathbb{E}_{\mathbb{P}}[ \cdot]$ denotes the expectation with respect to $\mathbb{P}$.
The true underlying probability measure $\mathbb{P}$ is not known exactly, but it is known that  $\mathbb{P}\in \mathcal{P}$. 

\begin{defi}[Conditional Value-at-Risk \cite{RocU00,ZymKR13-b}]
For a given measurable loss function $L : \mathbb{R}^n \rightarrow  \mathbb{R}$, a probability distribution $\mathbb{P}$ on $\mathbb{R}^n$ and a level 
 $\varepsilon \in (0, 1)$, the CVaR at $\varepsilon$ with respect to $\mathbb{P}$ is defined as
 \begin{align*}
\mathbb{P}\text{-CVaR}_{\varepsilon}[L({\xi})] =\inf_{\beta \in \mathbb{R}} \left\{ \beta + \frac{1}{\varepsilon}\mathbb{E}_{\mathbb{P}}[(L({\xi})-\beta)^+]\right\}.
\end{align*}
\end{defi}
CVaR is the conditional expectation of loss above the ($1-\varepsilon$)-quantile of the loss function \cite{ZymKR13-b} and quantifies the tail risk.

The worst-case CVaR  is the supremum of CVaR over a given set of probability distributions as defined below: 
\begin{defi}[Worst-case CVaR \cite{ZymKR13-b}]
The worst-case CVaR over $\mathcal{P}$ is given by
 \begin{align*}
\sup_{\mathbb{P}\in \mathcal{P}}\mathbb{P}\text{-CVaR}_{\varepsilon}[L({\xi})]
=\inf_{\beta \in \mathbb{R}} \left\{ \beta + \frac{1}{\varepsilon}\sup_{\mathbb{P}\in \mathcal{P}}\mathbb{E}_{\mathbb{P}}[(L({\xi})-\beta)^+]\right\}.
\end{align*}
\end{defi}
Here, the exchange between the supremum and infimum is justified by the stochastic saddle point theorem \cite{ShaK02}.

One reason that the (worst-case) CVaR is widely used for risk assessment is its mathematically attractive properties of coherency.
\begin{prop}[Coherence properties \cite{ZhuF09,Art99}]\label{prop:coh}
The worst-case CVaR is a coherent risk measure, i.e., it satisfies the following properties:
Let $L_1 = L_1(\xi)$ and $L_2 = L_2(\xi)$ be two measurable loss functions.
\begin{itemize}
\item Sub-additivity: For all $L_1$ and $L_2$,
\begin{align*}
&\sup_{\mathbb{P}\in \mathcal{P}}\mathbb{P}\text{-CVaR}_{\varepsilon}[L_1+L_2]\\
 & \leq \sup_{\mathbb{P}\in \mathcal{P}}\mathbb{P}\text{-CVaR}_{\varepsilon}[L_1] +
\sup_{\mathbb{P}\in \mathcal{P}}\mathbb{P}\text{-CVaR}_{\varepsilon}[L_2]; 
\end{align*}
\item Positive homogeneity: For a positive constant $c_1>0$, 
\begin{align*}
\sup_{\mathbb{P}\in \mathcal{P}}\mathbb{P}\text{-CVaR}_{\varepsilon}[c_1L_1] =c_1 \sup_{\mathbb{P}\in \mathcal{P}}\mathbb{P}\text{-CVaR}_{\varepsilon}[L_1]; 
\end{align*}
\item Monotonicity: If $L_1\leq L_2$ almost surely, 
\begin{align*}
\sup_{\mathbb{P}\in \mathcal{P}}\mathbb{P}\text{-CVaR}_{\varepsilon}[L_1] \leq
\sup_{\mathbb{P}\in \mathcal{P}}\mathbb{P}\text{-CVaR}_{\varepsilon}[L_2];
\end{align*}
\item Translation invariance: For a constant $c_2$,
\begin{align*}
\sup_{\mathbb{P}\in \mathcal{P}}\mathbb{P}\text{-CVaR}_{\varepsilon}[L_1+c_2] =\sup_{\mathbb{P}\in \mathcal{P}}\mathbb{P}\text{-CVaR}_{\varepsilon}[L_1] +
c_2.
\end{align*}
\end{itemize}
\end{prop}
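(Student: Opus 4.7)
The plan is to verify each of the four coherence properties by working directly with the representation
\begin{align*}
\sup_{\mathbb{P}\in \mathcal{P}}\mathbb{P}\text{-CVaR}_{\varepsilon}[L]
= \inf_{\beta \in \mathbb{R}} \left\{ \beta + \tfrac{1}{\varepsilon}\sup_{\mathbb{P}\in \mathcal{P}}\mathbb{E}_{\mathbb{P}}[(L-\beta)^+]\right\},
\end{align*}
which is already given. The pointwise facts I will exploit are: the map $x\mapsto x^+$ is sub-additive and positively homogeneous for positive scalars; $\mathbb{E}_{\mathbb{P}}[\,\cdot\,]$ is monotone and linear; and $\sup_{\mathbb{P}\in\mathcal{P}}$ is sub-additive and preserves monotonicity of the integrand. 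Combining these ingredients at the appropriate step for each property reduces the entire claim to elementary manipulations of the dummy variable $\beta$ in the outer infimum.

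For \emph{sub-additivity}, I would introduce two dummy variables. For any $\beta_1,\beta_2\in\mathbb{R}$, set $\beta=\beta_1+\beta_2$ and note the pointwise inequality $(L_1+L_2-\beta_1-\beta_2)^+\le (L_1-\beta_1)^+ + (L_2-\beta_2)^+$. Taking $\mathbb{E}_{\mathbb{P}}$ of both sides, then $\sup_{\mathbb{P}\in\mathcal{P}}$ (using sub-additivity of the sup), and finally adding $\beta_1+\beta_2$, the inner objective at $\beta_1+\beta_2$ for $L_1+L_2$ is dominated by the sum of the inner objectives at $\beta_1$ for $L_1$ and at $\beta_2$ for $L_2$. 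Taking independent infima over $\beta_1$ and $\beta_2$ yields the claim. For \emph{monotonicity}, $L_1\le L_2$ almost surely gives $(L_1-\beta)^+\le (L_2-\beta)^+$ a.s.\ for every $\beta$; passing through $\mathbb{E}_{\mathbb{P}}$ and $\sup_{\mathbb{P}\in\mathcal{P}}$ preserves the inequality, and the infimum over the common variable $\beta$ inherits it.

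For \emph{positive homogeneity}, I would substitute $\beta=c_1\beta'$ in the infimum; since $c_1>0$, $(c_1 L_1 - c_1\beta')^+ = c_1(L_1-\beta')^+$, and the whole objective becomes $c_1\{\beta' + \varepsilon^{-1}\sup_{\mathbb{P}}\mathbb{E}_{\mathbb{P}}[(L_1-\beta')^+]\}$. The infimum over $\beta'\in\mathbb{R}$ then factors out $c_1$. For \emph{translation invariance}, shift the dummy by $\beta=\beta'+c_2$; then $L_1+c_2-\beta = L_1-\beta'$, so the inner supremum reduces to the $L_1$ one and $c_2$ emerges as an additive constant outside the infimum.

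The main obstacle, if any, is making sure the $\sup_{\mathbb{P}\in\mathcal{P}}$ step behaves as claimed in the sub-additivity argument; this is the one-line inequality $\sup_{\mathbb{P}}\mathbb{E}_{\mathbb{P}}[f+g]\le \sup_{\mathbb{P}}\mathbb{E}_{\mathbb{P}}[f]+\sup_{\mathbb{P}}\mathbb{E}_{\mathbb{P}}[g]$, which follows from the definition of supremum and linearity of expectation. One should also verify that the infima are finite (so the inequalities are not vacuous); this is guaranteed because every $\mathbb{P}\in\mathcal{P}$ has finite second moments, so $\mathbb{E}_{\mathbb{P}}[(L-\beta)^+]<\infty$ for any affine or quadratic loss $L$ of interest. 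Beyond these minor points, each property is a one- or two-line manipulation after the right substitution in $\beta$, and I do not expect any genuinely hard step.
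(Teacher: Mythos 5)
Your proof is correct. Note, however, that the paper does not actually prove Proposition \ref{prop:coh}: it states the coherence of the worst-case CVaR as a known result and defers entirely to the cited references \cite{ZhuF09,Art99}. What you have written is therefore a self-contained derivation that the paper omits. Your route --- working directly with the representation $\inf_{\beta}\{\beta+\varepsilon^{-1}\sup_{\mathbb{P}\in\mathcal{P}}\mathbb{E}_{\mathbb{P}}[(L-\beta)^{+}]\}$ and using the pointwise inequality $(x+y)^{+}\le x^{+}+y^{+}$, the substitutions $\beta=c_{1}\beta'$ and $\beta=\beta'+c_{2}$, and the sub-additivity and monotonicity of $\sup_{\mathbb{P}\in\mathcal{P}}\mathbb{E}_{\mathbb{P}}[\cdot]$ --- is the standard argument and each step checks out; in particular the two-dummy-variable trick for sub-additivity correctly avoids any need to couple $\beta_{1}$ and $\beta_{2}$. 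Two small points worth keeping in mind: (i) you do not need the saddle-point interchange of $\sup$ and $\inf$ anywhere, since you only ever pass $\sup_{\mathbb{P}}$ through pointwise inequalities, which is an advantage of your formulation; (ii) in the monotonicity item, ``$L_{1}\le L_{2}$ almost surely'' should be read as holding under every $\mathbb{P}\in\mathcal{P}$ (or pointwise), since a null set for one member of the ambiguity set need not be null for another --- this is a subtlety of the proposition's statement rather than a flaw in your argument. Your remark on finiteness of $\mathbb{E}_{\mathbb{P}}[(L-\beta)^{+}]$ under the finite-second-moment assumption is the right caveat for the loss functions the paper actually uses.
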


Another reason that the worst-case CVaR is used for risk assessment due to the fact that it can be computed efficiently for some special cases that appear often.
If $L(\xi)$ is quadratic with respect to $\xi$, then the worst-case CVaR can be computed by a semidefinite program.
Let the second-order moment matrix of $\xi $ by 
\begin{align}\label{eq:Ome}
\Omega &= \begin{bmatrix}\Sigma + \mu \mu^\top & \mu \\ \mu^\top & 1 \end{bmatrix}. 
\end{align}
\begin{lem}[Quadratic function  \cite{ZymKR13-b}, \cite{ZymKR13}] \label{lem:CVaR_quadratic}
Let 
 \begin{align}
 L({\xi}) = \xi^\top P \xi + 2q^\top \xi + r,
 \end{align}
where $P \in \mathbb{S}^n$, $q\in \mathbb{R}^n$ and $r\in \mathbb{R}$.
Then 
 \begin{align} \begin{aligned}\label{eq:sdp}
\sup_{\mathbb{P}\in \mathcal{P}}\mathbb{P}\text{-CVaR}_{\varepsilon}[L({\xi})] =
&\inf_{ \beta} \left\{\beta + \frac{1}{\varepsilon}\text{Tr}(\Omega N): \right.\\
& N \succcurlyeq 0, \\
&\left. N-\left[\begin{array}{cc}P &q \\ q^\top & r-\beta \end{array}\right] \succcurlyeq 0\right\}.
\end{aligned}\end{align}
\end{lem}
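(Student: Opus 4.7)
The plan is to reduce the worst-case CVaR to a nested optimization and then dualize the inner moment problem into a semidefinite program. Using Definition 2.2 together with the convention
\[
M_\beta := \begin{bmatrix} P & q \\ q^\top & r-\beta \end{bmatrix}, \qquad \zeta := \begin{bmatrix} \xi \\ 1 \end{bmatrix},
\]
one has $L(\xi)-\beta = \zeta^\top M_\beta \zeta$ and $\mathbb{E}_\mathbb{P}[\zeta\zeta^\top] = \Omega$ for every $\mathbb{P}\in\mathcal{P}$. The task thus reduces to showing, for each fixed $\beta$,
\[
\sup_{\mathbb{P}\in\mathcal{P}} \mathbb{E}_\mathbb{P}\bigl[(\zeta^\top M_\beta \zeta)^+\bigr] = \inf_{N}\left\{\text{Tr}(\Omega N) : N\succeq 0,\ N - M_\beta \succeq 0\right\},
\]
since combining this identity with the outer $\inf_\beta$ in Definition 2.2 immediately produces the joint SDP in the lemma statement.

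For the easy direction ($\leq$), any $N$ feasible for the right-hand SDP satisfies $\zeta^\top N \zeta \geq 0$ and $\zeta^\top N \zeta \geq \zeta^\top M_\beta \zeta$ pointwise in $\zeta$, and hence $\zeta^\top N \zeta \geq (\zeta^\top M_\beta \zeta)^+$. Taking expectation under any $\mathbb{P}\in\mathcal{P}$ yields $\mathbb{E}_\mathbb{P}[(L(\xi)-\beta)^+] \leq \text{Tr}(\Omega N)$; minimizing over feasible $N$ and then supremizing over $\mathbb{P}$ gives the inequality.

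For the reverse inequality ($\geq$) I would invoke conic duality for the generalized moment problem. The primal---maximize $\int (\zeta^\top M_\beta \zeta)^+ \, d\mathbb{P}$ over positive Borel measures on $\mathbb{R}^n$ with prescribed second-moment matrix $\Omega$---is an infinite-dimensional linear program whose Lagrangian dual is exactly the SDP on the right-hand side. Strong duality follows from Shapiro-type conic duality (or, classically, Isii's moment-problem duality), which applies once a Slater condition is verified: in the dual, $N = \alpha I$ with $\alpha$ sufficiently large makes both $N \succ 0$ and $N - M_\beta \succ 0$, so strict feasibility holds and the duality gap closes.

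The main obstacle is this last step---rigorously justifying zero duality gap in the moment-problem setting, which requires appealing to an external conic-duality result and checking consistency of the moment data encoded in $\Omega$. The weak-duality half is purely algebraic, and the $\sup/\inf$ exchange in the outer problem has already been handled by the stochastic saddle-point theorem cited after Definition 2.2. Once strong duality is in hand, merging the two infima into a single joint minimization over $(\beta, N)$ is immediate and recovers the stated SDP.
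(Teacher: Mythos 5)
The paper itself offers no proof of this lemma; it is imported verbatim from the cited works of Zymler, Kuhn, and Rustem, so there is no internal argument to compare against. Your reconstruction is the standard one from those references and is essentially correct: the algebraic identity $L(\xi)-\beta=\zeta^\top M_\beta\zeta$ with $\mathbb{E}_{\mathbb{P}}[\zeta\zeta^\top]=\Omega$, the pointwise majorization $\zeta^\top N\zeta\geq(\zeta^\top M_\beta\zeta)^+$ for weak duality, strong duality for the inner moment problem, and the merge of the two infima are exactly the right steps, and the outer $\sup/\inf$ exchange is indeed covered by the saddle-point theorem the paper already cites.

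The one point I would tighten is the constraint qualification for the strong-duality step. The classical sufficient condition in the moment-problem duality you invoke (Isii, or Shapiro's conic duality as used by Zymler et al.) is an interior-point condition on the \emph{moment data}: the matrix $\Omega$ must lie in the interior of the cone of feasible second-order moment matrices, which here is equivalent to $\Sigma\succ 0$ (guaranteed by the paper's standing assumption $\Sigma_w\succ 0$). Strict feasibility of the finite-dimensional $N$-problem, which is what you verify with $N=\alpha I$, is not by itself the hypothesis of that theorem in the infinite-dimensional setting, so as written the justification points at the wrong side of the primal--dual pair. The conclusion is unaffected because the correct condition does hold, but the citation of the duality theorem should be anchored to $\Omega\succ 0$ rather than to Slater feasibility of the SDP.
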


If the mean of the random vector  $\xi$ is zero, 
Lemma \ref{lem:CVaR_quadratic}  leads to the following useful results.
\begin{lem}[A property of $L(\xi)$]  \label{lem:CVaR_cross}
Suppose $\mu=0$. Let
 \begin{align*}
 L_1({\xi}) = \xi^\top P \xi + 2q^\top \xi + r,\\
 L_2({\xi}) = \xi^\top P \xi - 2q^\top \xi + r,
  \end{align*}
where $P \in \mathbb{S}^n$, $q\in \mathbb{R}^n$ and $r\in \mathbb{R}$. Then it holds that 
\begin{align}
\sup_{\mathbb{P}\in \mathcal{P}}\mathbb{P}\text{-CVaR}_{\varepsilon}[  L_1({\xi})] =\sup_{\mathbb{P}\in \mathcal{P}}\mathbb{P}\text{-CVaR}_{\varepsilon}[ L_2({\xi})]. \label{eq:CVaR_cross}
\end{align}
\end{lem}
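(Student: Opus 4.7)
The plan is to leverage the SDP characterization in Lemma \ref{lem:CVaR_quadratic} together with the fact that $\mu=0$ makes the second-order moment matrix block-diagonal, $\Omega = \mathrm{diag}(\Sigma,1)$. The key algebraic observation is that conjugation by the signature matrix $J = \mathrm{diag}(I_n,-1)$ fixes $\Omega$, that is $J\Omega J = \Omega$, while it flips the sign of the off-diagonal block of the data matrix that appears in Lemma \ref{lem:CVaR_quadratic}.

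Concretely, I would write down the two SDPs delivered by Lemma \ref{lem:CVaR_quadratic} for $L_1$ and $L_2$, calling $M_1(\beta)$ and $M_2(\beta)$ their respective data matrices (which differ only in the sign of the off-diagonal block). A direct computation gives $J M_1(\beta) J = M_2(\beta)$, so the change of variables $(\beta, N) \mapsto (\beta, J N J)$ maps feasible points of the first SDP bijectively to feasible points of the second: $J N J \succcurlyeq 0$ because $J$ is orthogonal and $N \succcurlyeq 0$, while $J N J - M_2(\beta) = J(N - M_1(\beta))J \succcurlyeq 0$. Using the cyclic property of the trace together with $J \Omega J = \Omega$, the objective value $\beta + \tfrac{1}{\varepsilon}\mathrm{Tr}(\Omega N)$ is preserved by the map. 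Hence the two SDPs attain identical optimal values, which is exactly \eqref{eq:CVaR_cross}.

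The only real subtlety is verifying that $J\Omega J = \Omega$ genuinely uses the hypothesis $\mu=0$ (otherwise the off-diagonal $\mu$ blocks of $\Omega$ would acquire a sign), together with checking that congruence by $J$ preserves the semidefinite cone; both are routine. As an alternative conceptual proof, one could observe that when $\mu=0$ the pushforward of any $\mathbb{P}\in\mathcal{P}$ under $\xi \mapsto -\xi$ again lies in $\mathcal{P}$ and satisfies $L_2(\xi) = L_1(-\xi)$ in distribution, so the worst-case CVaRs coincide directly; the SDP route, however, blends most naturally with the computational flavour of the surrounding results.
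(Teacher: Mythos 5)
Your proof is correct and takes essentially the same route as the paper's: both rely on the SDP characterization of Lemma \ref{lem:CVaR_quadratic} and on conjugation by the signature matrix $J=\mathrm{diag}(I_n,-1)$, which flips the sign of the off-diagonal block of the data matrix while leaving $\Omega$ fixed precisely because $\mu=0$ makes $\Omega$ block diagonal. Your write-up actually makes the argument slightly more airtight than the paper's terse version, since you verify explicitly that the map $(\beta,N)\mapsto(\beta,JNJ)$ is a feasibility-preserving bijection with unchanged objective value rather than only relating the two optimal solutions.
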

\begin{proof}
Let  $N$ and $\bar{N}$ solve \eqref{eq:sdp} for $\sup_{\mathbb{P}\in \mathcal{P}}\mathbb{P}\text{-CVaR}_{\varepsilon}[ L_1({\xi})] $ and $\sup_{\mathbb{P}\in \mathcal{P}}\mathbb{P}\text{-CVaR}_{\varepsilon}[L_2({\xi})]$. 
Then, they satisfy
 \begin{align*}
N =\left[\begin{matrix} I_n & 0 \\0 & -1\end{matrix}\right] \bar{N}\left[\begin{matrix}I_n & 0 \\0 & -1\end{matrix}\right]. 
\end{align*}
Since $\Omega$ in \eqref{eq:Ome} is block diagonal for $\mu=0$,  the signs of off-diagonal blocks do not enter the objective function, the claim follows.
\end{proof}

\begin{cor} \label{cor:CVaR_cross2}
Suppose $\mu=0$ and $\xi\in \mathbb{R}$. Then it holds that 
\begin{align}
\sup_{\mathbb{P}\in \mathcal{P}}\mathbb{P}\text{-CVaR}_{\varepsilon}[  \xi ] =\sup_{\mathbb{P}\in \mathcal{P}}\mathbb{P}\text{-CVaR}_{\varepsilon}[-\xi]\geq 0. \label{eq:CVaR_cross2}
\end{align}
\end{cor}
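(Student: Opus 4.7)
The plan is to reduce both parts of the corollary to results already established. For the equality in \eqref{eq:CVaR_cross2}, I will simply specialize Lemma \ref{lem:CVaR_cross} to the scalar case $n=1$ with $P=0$, $q=\frac{1}{2}$, $r=0$, so that $L_1(\xi)=\xi$ and $L_2(\xi)=-\xi$. Since $\mu=0$ is assumed, Lemma \ref{lem:CVaR_cross} applies directly and immediately gives the first equality.

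For the inequality $\sup_{\mathbb{P}\in\mathcal{P}}\mathbb{P}\text{-CVaR}_{\varepsilon}[\xi]\geq 0$, I will invoke the coherence properties from Proposition \ref{prop:coh}. The key observation is that the worst-case CVaR of the identically zero loss function is zero; this is a one-line consequence of the definition (the infimum over $\beta$ of $\beta+\frac{1}{\varepsilon}(-\beta)^{+}$ is attained at $\beta=0$), or equivalently of translation invariance applied with $L_1\equiv 0$ and $c_2=0$. Then I apply sub-additivity to the decomposition $0=\xi+(-\xi)$ to obtain
\begin{align*}
0 \;=\; \sup_{\mathbb{P}\in\mathcal{P}}\mathbb{P}\text{-CVaR}_{\varepsilon}[\xi+(-\xi)]
\;\leq\; \sup_{\mathbb{P}\in\mathcal{P}}\mathbb{P}\text{-CVaR}_{\varepsilon}[\xi]
 + \sup_{\mathbb{P}\in\mathcal{P}}\mathbb{P}\text{-CVaR}_{\varepsilon}[-\xi],
\end{align*}
and combine this with the equality just established to conclude that $2\sup_{\mathbb{P}\in\mathcal{P}}\mathbb{P}\text{-CVaR}_{\varepsilon}[\xi]\geq 0$.

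Neither step presents a real obstacle; the entire proof is essentially a bookkeeping exercise combining Lemma \ref{lem:CVaR_cross} with sub-additivity. The only small subtlety is making sure the zero loss function really has zero worst-case CVaR, but this is immediate from the variational formula in the definition since $\mathbb{E}_{\mathbb{P}}[(-\beta)^{+}]=(-\beta)^{+}$ does not depend on $\mathbb{P}$.
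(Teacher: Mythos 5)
Your proof is correct and matches the paper's (implicit) argument: the paper states this corollary without proof, as an immediate specialization of Lemma \ref{lem:CVaR_cross}, and your choice $P=0$, $q=\tfrac{1}{2}$, $r=0$ for the equality together with the sub-additivity step $0=\sup_{\mathbb{P}\in\mathcal{P}}\mathbb{P}\text{-CVaR}_{\varepsilon}[\xi+(-\xi)]\leq 2\sup_{\mathbb{P}\in\mathcal{P}}\mathbb{P}\text{-CVaR}_{\varepsilon}[\xi]$ supplies exactly the missing details. One small quibble: your parenthetical claim that translation invariance with $L_1\equiv 0$ and $c_2=0$ yields $\sup_{\mathbb{P}\in\mathcal{P}}\mathbb{P}\text{-CVaR}_{\varepsilon}[0]=0$ is vacuous (it only asserts that the quantity equals itself), but your direct computation of $\inf_{\beta}\{\beta+\tfrac{1}{\varepsilon}(-\beta)^{+}\}=0$ is correct and carries the argument on its own.
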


\begin{lem}[A bound on linear $L(\xi)$] \label{lem:CVaR_bd}
Suppose $\mu=0$. Then,
\begin{align}
\sup_{\mathbb{P}\in \mathcal{P}}\mathbb{P}\text{-CVaR}_{\varepsilon}[ q^\top \xi] 
\leq \sum_{i=1}^n |q_i| \sup_{\mathbb{P}\in \mathcal{P}}\mathbb{P}\text{-CVaR}_{\varepsilon}[  \xi_i] 
\end{align}
where $q\in \mathbb{R}^n$. 
\end{lem}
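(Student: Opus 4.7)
The plan is to reduce the linear loss $q^\top \xi = \sum_{i=1}^n q_i \xi_i$ coordinate-wise and then apply the coherence properties from Proposition \ref{prop:coh} together with Corollary \ref{cor:CVaR_cross2}. All ingredients are already in hand, so the proof should be short and mostly book-keeping.

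First, I would write $q^\top \xi = \sum_{i=1}^n q_i \xi_i$ and apply sub-additivity of the worst-case CVaR iteratively (a simple induction on $n$ starting from the two-term case in Proposition \ref{prop:coh}) to obtain
\begin{align*}
\sup_{\mathbb{P}\in \mathcal{P}}\mathbb{P}\text{-CVaR}_{\varepsilon}[q^\top \xi]
\leq \sum_{i=1}^n \sup_{\mathbb{P}\in \mathcal{P}}\mathbb{P}\text{-CVaR}_{\varepsilon}[q_i \xi_i].
\end{align*}

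Next, I would handle each summand by splitting on the sign of $q_i$. If $q_i \geq 0$, positive homogeneity yields $\sup \mathbb{P}\text{-CVaR}_{\varepsilon}[q_i \xi_i] = |q_i| \sup \mathbb{P}\text{-CVaR}_{\varepsilon}[\xi_i]$ directly. If $q_i < 0$, writing $q_i \xi_i = |q_i|(-\xi_i)$ and using positive homogeneity gives $\sup \mathbb{P}\text{-CVaR}_{\varepsilon}[q_i \xi_i] = |q_i| \sup \mathbb{P}\text{-CVaR}_{\varepsilon}[-\xi_i]$, and then Corollary \ref{cor:CVaR_cross2} (which needs $\mu=0$, available by assumption) lets me replace $-\xi_i$ by $\xi_i$ without changing the value. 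In both cases the summand equals $|q_i| \sup \mathbb{P}\text{-CVaR}_{\varepsilon}[\xi_i]$, and the claim follows.

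No step looks hard. The only mild subtlety is that positive homogeneity in Proposition \ref{prop:coh} is stated for strictly positive scalars, so the case $q_i = 0$ must be handled separately, but it is trivial since both sides of the inequality for that index are zero (using Corollary \ref{cor:CVaR_cross2} to ensure $\sup \mathbb{P}\text{-CVaR}_{\varepsilon}[\xi_i] \geq 0$, so no sign issue arises when bounding). Everything else is a direct invocation of already-proved facts.
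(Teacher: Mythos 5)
Your proposal is correct and follows essentially the same route as the paper: sub-additivity to split the sum, positive homogeneity to pull out $|q_i|$, and the sign-flip symmetry (the paper cites Lemma \ref{lem:CVaR_cross}, you cite its scalar specialization Corollary \ref{cor:CVaR_cross2}) to replace $\pm\xi_i$ by $\xi_i$. Your explicit handling of the $q_i=0$ case is a minor extra care the paper glosses over, but it does not change the argument.
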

\begin{proof}
From Proposition \ref{prop:coh}, it follows that
\begin{align}\begin{aligned}
\sup_{\mathbb{P}\in \mathcal{P}}\mathbb{P}\text{-CVaR}_{\varepsilon}[ q^\top \xi ]
&=\sup_{\mathbb{P}\in \mathcal{P}}\mathbb{P}\text{-CVaR}_{\varepsilon}\left[\sum_{i=1}^n q_i \xi_i\right] \\
&\leq \sum_{i=1}^n \sup_{\mathbb{P}\in \mathcal{P}}\mathbb{P}\text{-CVaR}_{\varepsilon}[ q_i \xi_i] \\
&= \sum_{i=1}^n |q_i| \sup_{\mathbb{P}\in \mathcal{P}}\mathbb{P}\text{-CVaR}_{\varepsilon}[ \text{sign}(q_i) \xi_i] \\
&= \sum_{i=1}^n |q_i| \sup_{\mathbb{P}\in \mathcal{P}}\mathbb{P}\text{-CVaR}_{\varepsilon}[  \xi_i].
\end{aligned}\end{align}
The last equality follows from Lemma \ref{lem:CVaR_cross}. 
\end{proof}

\begin{rem}
For a vector $v = \left[ v_1, \cdots, v_n\right]^\top\in \mathbb{R}^n$,  we define element-wise inequality, element-wise absolute value and element-wise worst-case CVaR by
\begin{align}
v\geq 0 \ &\Leftrightarrow \  v_1\geq 0, \cdots, v_n \geq 0, \\
|v|
&=\left[\begin{matrix}
|v_1| \\
\vdots\\
|v_n|
\end{matrix}\right],\\
\sup_{\mathbb{P}\in \mathcal{P}}\mathbb{P}\text{-CVaR}_{\varepsilon}[ v] 
&=\left[\begin{matrix}
\sup_{\mathbb{P}\in \mathcal{P}}\mathbb{P}\text{-CVaR}_{\varepsilon}[ v_1] \\
\vdots\\
\sup_{\mathbb{P}\in \mathcal{P}}\mathbb{P}\text{-CVaR}_{\varepsilon}[ v_n] 
\end{matrix}\right],
\end{align}
respectively.
\end{rem}


\section{Discrete-time Control-affine System}\label{sec:model}
This paper deals with a discrete-time control-affine system in the form of
\begin{align}
x_{t+1} = f(x_t) +g(x_t)u_t+w_t, \ x(0) = x_0\in \mathcal{C}, \label{eq:sys}
\end{align}
where $x_t\in \mathbb{R}^n$, $u_t \in \mathbb{R}^m$ and $w_t \in  \mathbb{R}^n$ denote the state, the control input, and disturbance of the system at discrete time instant $t$, respectively, $f:\mathbb{R}^n \rightarrow \mathbb{R}^n$ and $g:\mathbb{R}^n\times \mathbb{R}^m \rightarrow \mathbb{R}$ are continuous functions, and $ \mathcal{C}$ is a safe set.

It is assumed that $w_t$ is the sequence of independent and identically distributed random variables with 
a distribution having zero mean $\mu_w=0$ and finite covariance $\Sigma_w \succ 0$ for all $t$.  Although the precise probability measure $\mathbb{P}$ is not known exactly, it is known that  $\mathbb{P}\in \mathcal{P}$. This set is defined as:
\begin{align*}
 \mathcal{P}= \left\{\mathbb{P}: \mathbb{E}_{\mathbb{P}}\left[ \left[\begin{array}{c}w_i\\ 1\end{array} \right]\left[\begin{array}{c}w_j\\ 1\end{array} \right]^{\top}\right]
=  \left[\begin{array}{cc} \Sigma_w \delta_{ij}  & 0\\ 0^\top & 1 \end{array}\right], \forall i, j\right\}.
\end{align*}
Thus,  the second-order moment matrix of $w_t$ is given by
\begin{align}
\Omega_w = \left[\begin{array}{cc} \Sigma_w  & 0\\ 0^\top & 1 \end{array}\right].
\label{eq:omega_w}
\end{align}


\section{Control Barrier Function}\label{sec:CBF}
We extend the discrete-time control barrier function described in \cite{CosCT23} to accommodate risk considerations.

We first define the safe set $\mathcal{C}$ as the superlevel set
of a continuously differentiable function $h : \mathbb{R}^n \rightarrow \mathbb{R}^p$
\begin{align}\label{eq:safeset}
\mathcal{C} = \{x \in  \mathbb{R}^n: h(x)\geq 0\}.
\end{align}
We assume $p=1$ except for Section \ref{sec:poly}.

With $\mathcal{C}$, the discrete-time risk-aware control barrier function is defined below.
\begin{defi}[Discrete-time Risk-Aware Control Barrier Function]\label{defi:CBF}
The function $h$ is a discrete-time risk-aware control barrier function for \eqref{eq:sys} on $\mathcal{C}$ if there exists an $\alpha \in [0,1]$ such that for each $x \in  \mathbb{R}^n$,
there exists a $u \in  \mathbb{R}^m$ such that:
\begin{align}\label{eq:CBF}
\sup_{\mathbb{P}\in \mathcal{P}}\mathbb{P}\text{-CVaR}_{\varepsilon}[-h(x^+) ]\leq - \alpha h (x).
\end{align}
\end{defi}

\begin{rem}
The condition \eqref{eq:CBF} is sufficient for  
\begin{align}
\inf_{\mathbb{P}\in \mathcal{P}}\mathbb{P}[\alpha h (x)-h(x^+)\leq 0]\geq 1- \varepsilon. \label{eq:CBF2}
\end{align}
Moreover, if $-h(x^+)$ is concave in $w$ or quadratic in $w$, then the condition \eqref{eq:CBF} is a necessary and sufficient condition for 
\eqref{eq:CBF2}   \cite{ZymKR13-b}.
Thus, roughly speaking, the condition \eqref{eq:CBF} aims at having $h(x^+)\geq \alpha h(x)$ with high probability.
\end{rem}

\begin{rem}
It's worth noting that the condition \eqref{eq:CBF} is different from
\begin{align}
\sup_{\mathbb{P}\in \mathcal{P}}\mathbb{P}\text{-CVaR}_{\varepsilon}[h(x^+) ]\geq  \alpha h (x)
\end{align}
due to the coherent property in Proposition \ref{prop:coh}. 
Thus different from the definition in  \cite{SinAA23}.
\end{rem}

In the following, we will see how the safety constraint \eqref{eq:CBF} can be simplified so that the integrated control problem can be solved efficiently. 
\subsection{Half-space safe set}\label{sec:hs}

Consider a scenario where the safe set is a half-space. In this case, it can be represented by an affine function $h(x)$:
\begin{align}\begin{aligned} \label{eq:hs-set}
\mathcal{C}_{\text{hs}} &= \{x \in  \mathbb{R}^n: h(x)= q^\top x+r  \geq 0\}.
\end{aligned}\end{align}

Given this representation, the constraint \eqref{eq:CBF} can be expressed as a linear function of $u$ as follows:
\begin{thm}\label{thm:hs}
If $h(x) = q^\top x+r$, $q\in\mathbb{R}^n$ and $r\in\mathbb{R}$, then the constraint \eqref{eq:CBF} holds if and only if
\begin{align}
-q^\top g(x)u &\leq \phi(x), \label{eq:affine}
\end{align}
where
\begin{align}
\phi (x) =q^\top ( f(x)-\alpha  x) +(1-\alpha)r - \sup_{\mathbb{P}\in \mathcal{P}}\mathbb{P}\text{-CVaR}_{\varepsilon}[q^\top w ].
\end{align}
\end{thm}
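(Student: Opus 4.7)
The plan is to reduce the CVaR inequality \eqref{eq:CBF} to the scalar affine bound on $u$ by peeling off the deterministic pieces via translation invariance and handling the sign of the stochastic piece via Lemma \ref{lem:CVaR_cross}. Both directions will follow from the same chain of equalities, which is why the theorem is an "if and only if."

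First I would substitute $x^+ = f(x)+g(x)u+w$ into $-h(x^+)$ to obtain
\begin{equation*}
-h(x^+) = -q^\top f(x) - q^\top g(x)u - r \;-\; q^\top w.
\end{equation*}
The first three terms on the right-hand side are constant in $w$ (the randomness enters only through the last term), so by the translation-invariance part of Proposition \ref{prop:coh} they can be pulled out of the worst-case CVaR, yielding
\begin{equation*}
\sup_{\mathbb{P}\in\mathcal{P}}\mathbb{P}\text{-CVaR}_{\varepsilon}[-h(x^+)]
= -q^\top f(x) - q^\top g(x)u - r + \sup_{\mathbb{P}\in\mathcal{P}}\mathbb{P}\text{-CVaR}_{\varepsilon}[-q^\top w].
\end{equation*}

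Next, since $\mu_w=0$, I would invoke Lemma \ref{lem:CVaR_cross} (applied with $P=0$, $r=0$ to the linear losses $\pm q^\top w$) to conclude
\begin{equation*}
\sup_{\mathbb{P}\in\mathcal{P}}\mathbb{P}\text{-CVaR}_{\varepsilon}[-q^\top w] = \sup_{\mathbb{P}\in\mathcal{P}}\mathbb{P}\text{-CVaR}_{\varepsilon}[q^\top w],
\end{equation*}
so that the CBF condition \eqref{eq:CBF} becomes
\begin{equation*}
-q^\top f(x) - q^\top g(x)u - r + \sup_{\mathbb{P}\in\mathcal{P}}\mathbb{P}\text{-CVaR}_{\varepsilon}[q^\top w] \leq -\alpha(q^\top x + r).
\end{equation*}

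Finally I would move the $u$-dependent term to the left and collect the rest on the right, which gives exactly $-q^\top g(x)u \leq \phi(x)$ with $\phi(x) = q^\top(f(x)-\alpha x) + (1-\alpha)r - \sup_{\mathbb{P}\in\mathcal{P}}\mathbb{P}\text{-CVaR}_{\varepsilon}[q^\top w]$. Each of the three manipulations above is reversible, so this simultaneously establishes both implications. There is no real obstacle here: the only subtlety is remembering to quote Lemma \ref{lem:CVaR_cross} (rather than positive homogeneity, which would only give the $c_1>0$ case) to handle the sign flip inside the CVaR of the linear disturbance term.
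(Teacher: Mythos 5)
Your proof is correct and follows essentially the same route as the paper's: substitute the dynamics, use translation invariance from Proposition \ref{prop:coh} to extract the deterministic terms, and invoke Lemma \ref{lem:CVaR_cross} with $\mu_w=0$ to replace $\sup_{\mathbb{P}\in\mathcal{P}}\mathbb{P}\text{-CVaR}_{\varepsilon}[-q^\top w]$ by $\sup_{\mathbb{P}\in\mathcal{P}}\mathbb{P}\text{-CVaR}_{\varepsilon}[q^\top w]$ before rearranging. Your explicit remark that every step is an equality (hence reversible, giving the ``if and only if'') is a point the paper leaves implicit.
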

\begin{proof}
By substitution, the right-hand-side of \eqref{eq:CBF} becomes
\begin{align}
-\alpha h(x) = -\alpha ( q^\top x+r).
\end{align}
On the other hand, using Proposition \ref{prop:coh} and Lemma \ref{lem:CVaR_cross}, the left-hand-side of \eqref{eq:CBF} becomes
\begin{align}\begin{aligned}
&\sup_{\mathbb{P}\in \mathcal{P}}\mathbb{P}\text{-CVaR}_{\varepsilon}[-h(x^+) ] \\
=& \sup_{\mathbb{P}\in \mathcal{P}}\mathbb{P}\text{-CVaR}_{\varepsilon}[-( q^\top x^++r) ]\\
=&\sup_{\mathbb{P}\in \mathcal{P}}\mathbb{P}\text{-CVaR}_{\varepsilon}[-(q^\top( f(x)+g(x)u+w)+r) ]\\
=&-(q^\top( f(x)+g(x)u)+r)+ \sup_{\mathbb{P}\in \mathcal{P}}\mathbb{P}\text{-CVaR}_{\varepsilon}[-q^\top w ]\\
=&-(q^\top( f(x)+g(x)u)+r)+ \sup_{\mathbb{P}\in \mathcal{P}}\mathbb{P}\text{-CVaR}_{\varepsilon}[q^\top w ].
\end{aligned}\end{align}
Thus, the constraint \eqref{eq:CBF} simplifies as 
\begin{align}\begin{aligned}
-q^\top g(x)u &\leq q^\top f(x)+r- \sup_{\mathbb{P}\in \mathcal{P}}\mathbb{P}\text{-CVaR}_{\varepsilon}[q^\top w ]-\alpha (q^\top x+r)\\
& = \phi(x).
\end{aligned}\end{align}
\end{proof}
\begin{rem}
As $\varepsilon$ approaches to 1, $\sup_{\mathbb{P}\in \mathcal{P}}\mathbb{P}\text{-CVaR}_{\varepsilon}[q^\top w ]$ approaches to 0, thus the condition \eqref{eq:affine} approaches to
\begin{align}
-q^\top g(x)u &\leq q^\top f(x)+r- \alpha (q^\top x+r),
\end{align}
which disregards the effect of uncertainties, or focusing on the performance of the expected value.
\end{rem}
\subsection{Polytopic safe set}\label{sec:poly}
In scenarios where the safe set takes the form of a polytope, we may also explicitly express the condition for $u$ to satisfy \eqref{eq:CBF}. 
In this case, the safe set is defined as the intersection of the superlevel set of 
affine functions $h_i(x) = q_i^\top x + r_i$ for $i = 1,\cdots, m$ for $q_i\in\mathbb{R}^n$ and $r_i\in\mathbb{R}$:
\begin{align}\begin{aligned} \label{eq:poly-set}
\mathcal{C}_{\text{poly}} &= \{x \in  \mathbb{R}^n: h_i (x) \geq 0, i = 1,\cdots, m \}\\
&=\{x \in  \mathbb{R}^n: h(x) = Q^\top x + r \geq 0 \},
\end{aligned}\end{align}
where
\begin{align}\begin{aligned}
Q = \left[\begin{matrix}q_1 & \cdots& q_m \end{matrix}\right]\in\mathbb{R}^{n\times m}, \quad 
r =  \left[\begin{matrix}r_1& \cdots&  r_m\end{matrix}\right]^\top \in\mathbb{R}^{m}.
\end{aligned}\end{align}
Similarly to Theorem \ref{thm:hs}, we have the following result.
\begin{thm}\label{thm:poly}
If $h(x) = Q^\top x+r$ with $Q\in\mathbb{R}^{n\times m}$ and $r\in\mathbb{R}^m$, then the constraint \eqref{eq:CBF} holds if and only if
\begin{align}\begin{aligned} \label{eq:poly}
-Q^\top g(x)u &\leq \phi(x),
\end{aligned}\end{align}
where
\begin{align}
\phi (x) =Q^\top ( f(x)-\alpha  x) +(1-\alpha) r- \sup_{\mathbb{P}\in \mathcal{P}}\mathbb{P}\text{-CVaR}_{\varepsilon}[Q^\top w ].
\end{align}
\end{thm}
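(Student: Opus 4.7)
The plan is to reduce Theorem \ref{thm:poly} to $m$ independent instances of Theorem \ref{thm:hs}, one per row of $h$. The Remark at the end of Section \ref{sec:pre} already defines the element-wise worst-case CVaR of a vector and the element-wise inequality convention, so the vector CBF condition \eqref{eq:CBF} written for $h(x) = Q^\top x + r \in \mathbb{R}^m$ unpacks as $m$ scalar inequalities of the form
\begin{align*}
\sup_{\mathbb{P}\in \mathcal{P}}\mathbb{P}\text{-CVaR}_{\varepsilon}[-h_i(x^+) ] \leq - \alpha\, h_i(x), \quad i=1,\dots,m,
\end{align*}
where $h_i(x) = q_i^\top x + r_i$.

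Next, for each fixed $i$ the scalar function $h_i$ is exactly in the affine class treated by Theorem \ref{thm:hs}. Applying that theorem row-by-row immediately gives the equivalent scalar safety constraint
\begin{align*}
-q_i^\top g(x) u \leq q_i^\top(f(x) - \alpha x) + (1-\alpha) r_i - \sup_{\mathbb{P}\in \mathcal{P}}\mathbb{P}\text{-CVaR}_{\varepsilon}[q_i^\top w],
\end{align*}
and this is an "if and only if" statement for each $i$ separately.

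The final step is just assembly: stacking the $m$ scalar inequalities vertically, using the matrix $Q = [q_1,\dots,q_m]$ and the elementwise CVaR convention from the Remark, produces exactly \eqref{eq:poly}. The elementwise-CVaR definition ensures that the stacked vector $\sup_{\mathbb{P}\in \mathcal{P}}\mathbb{P}\text{-CVaR}_{\varepsilon}[Q^\top w]$ has $i$-th component $\sup_{\mathbb{P}\in \mathcal{P}}\mathbb{P}\text{-CVaR}_{\varepsilon}[q_i^\top w]$, so no cross-row coupling arises and the equivalence is preserved under stacking.

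There is no genuine obstacle here; the only thing to verify carefully is the bookkeeping between the vector notation and the elementwise conventions, and that the biconditional from Theorem \ref{thm:hs} lifts to the stacked system (which it does because elementwise inequality between two vectors is equivalent to the conjunction of the scalar inequalities). I expect the proof to be essentially one short paragraph invoking Theorem \ref{thm:hs} and the Remark.
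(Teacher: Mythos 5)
Your proposal is correct and matches the paper's approach: the paper's proof of Theorem \ref{thm:poly} simply states that it follows directly from the proof of Theorem \ref{thm:hs}, which is precisely the row-by-row reduction and stacking you spell out. Your version just makes explicit the bookkeeping (element-wise conventions from the Remark, preservation of the biconditional under stacking) that the paper leaves implicit.
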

\begin{proof}
The proof directly follows from the proof of Theorem \ref{thm:hs}. 
\end{proof}

\begin{rem}
Similar to earlier observations, as $\varepsilon$ approaches to 1, 
clearly the condition \eqref{eq:poly} approaches to
\begin{align}
-Q^\top g(x)u &\leq Q^\top ( f(x)-\alpha  x) +(1-\alpha) r.
\end{align}
\end{rem}

\begin{rem}
This case, as well as the following two case, there is no guarantee that there is a feasible input that satisfies the constraint \eqref{eq:poly}.
If infeasible, we may want to introduce some penalty for the violation of this constraint. This is discussed in the controller design in Section \ref{sec:cont}.
\end{rem}
\subsection{Ellipsoidal safe set}\label{sec:ell}
Another tractable scenario arises when the safe set takes the form of an ellipsoid. 
In such situations, the safe set is expressed by using a positive definite matrix $E=E^\top \succ 0$:
\begin{align}\begin{aligned} \label{eq:ell-set}
\mathcal{C}_{\text{ell}} &= \{x \in  \mathbb{R}^n: x^\top E x \leq r \}\\
&=\{x \in  \mathbb{R}^n: h(x) = - x^\top E x + r \geq 0 \}.
\end{aligned}\end{align}
In this case, a sufficient condition for the constraint \eqref{eq:CBF} to be satisfied can be expressed using a quadratic function of $u$ as follows:
\begin{thm}\label{thm:ell}
Let $h(x) = -x^\top E x+ r$, $E\in\mathbb{R}^{n\times n}$ with $E= E^\top\succ 0$ and $r\in\mathbb{R}$. Also define $\bar{u} = [u^\top, v^\top]^\top$ for the control input $u\in \mathbb{R}^m$ and some variable $v\in \mathbb{R}^n$. Then, the constraint \eqref{eq:CBF} holds if 
\begin{align}\begin{aligned}  \label{eq:ell-const}
1) \quad & \bar{u}^\top \bar{H}(x)\bar{u} + \bar{q}^\top(x) \bar{u}+ \bar{r}(x)\leq 0 \text{ and } \\
2) \quad &\bar{A}(x)\bar{u} \leq 0,
\end{aligned}\end{align}
where
\begin{align}\begin{aligned} \label{eq:ell-const2}
\bar{H}(x) & = \left[ \begin{matrix}g(x)^\top E g(x)& 0\\ 0 & 0 \end{matrix}\right] \\
\bar{q}(x) & =2 \left[ \begin{matrix} g(x)^\top E f(x) \\\sup_{\mathbb{P}\in \mathcal{P}}\mathbb{P}\text{-CVaR}_{\varepsilon}[ w] \end{matrix}\right]\\
\bar{r}(x) & =\sup_{\mathbb{P}\in \mathcal{P}}\mathbb{P}\text{-CVaR}_{\varepsilon}[w^\top E w+(2Ef(x) )^\top w  ]  \\
&  +f(x)^\top E f(x)-r -\alpha (x^\top Ex - r)\\
\bar{A}(x) & = \left[ \begin{matrix}  Eg(x) & -I\\ - Eg(x) & -I \end{matrix}\right]. 
\end{aligned}\end{align}
\end{thm}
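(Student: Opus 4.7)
The plan is to expand
$-h(x^+) = (f(x)+g(x)u+w)^\top E (f(x)+g(x)u+w) - r$
and regroup the resulting terms according to their dependence on the random vector $w$. Three groups appear naturally: a fully deterministic piece $u^\top g(x)^\top E g(x) u + 2 f(x)^\top E g(x) u + f(x)^\top E f(x) - r$, a $u$-independent piece that is nonlinear in $w$, namely $w^\top E w + 2(E f(x))^\top w$, and a $u$-dependent linear-in-$w$ cross term $2(E g(x) u)^\top w$. This decomposition is the crucial preparation: it isolates the control dependence inside a purely linear-in-$w$ expression so that Lemma \ref{lem:CVaR_bd} can handle it, while the nonlinear-in-$w$ part that remains is free of $u$ and can be absorbed into $\bar{r}(x)$ as a known function of $x$.

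Next I would apply sub-additivity and translation invariance from Proposition \ref{prop:coh} to bound
$\sup_{\mathbb{P}\in\mathcal{P}} \mathbb{P}\text{-CVaR}_{\varepsilon}[-h(x^+)]$
from above by the sum of the deterministic piece, $\sup_{\mathbb{P}\in\mathcal{P}} \mathbb{P}\text{-CVaR}_{\varepsilon}[w^\top E w + 2(E f(x))^\top w]$, and $\sup_{\mathbb{P}\in\mathcal{P}} \mathbb{P}\text{-CVaR}_{\varepsilon}[2(E g(x) u)^\top w]$. After moving $-\alpha h(x) = \alpha(x^\top E x - r)$ to the same side, the deterministic piece together with the nonlinear-in-$w$ CVaR and the $\alpha$-term assemble exactly into $\bar{u}^\top \bar{H}(x)\bar{u}$, the upper block of $\bar{q}(x)^\top \bar{u}$, and $\bar{r}(x)$ from \eqref{eq:ell-const2}.

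For the remaining $u$-dependent cross term I would apply Lemma \ref{lem:CVaR_bd} with coefficient $q = 2 E g(x) u$ to obtain
$\sup_{\mathbb{P}\in\mathcal{P}} \mathbb{P}\text{-CVaR}_{\varepsilon}[2(E g(x) u)^\top w] \leq 2\,|E g(x) u|^\top \sup_{\mathbb{P}\in\mathcal{P}}\mathbb{P}\text{-CVaR}_{\varepsilon}[w]$.
To linearize the absolute value I introduce $v \in \mathbb{R}^n$ and enforce the element-wise inequality $-v \leq E g(x) u \leq v$, which packages as the single matrix inequality $\bar{A}(x)\bar{u} \leq 0$, i.e., condition 2 of \eqref{eq:ell-const}. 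Corollary \ref{cor:CVaR_cross2} ensures that each component of $\sup_{\mathbb{P}\in\mathcal{P}}\mathbb{P}\text{-CVaR}_{\varepsilon}[w]$ is nonnegative, so replacing $|E g(x) u|$ by $v$ in the bound can only loosen it and contributes the term $2 v^\top \sup_{\mathbb{P}\in\mathcal{P}}\mathbb{P}\text{-CVaR}_{\varepsilon}[w]$, which is exactly the lower block of $\bar{q}(x)^\top \bar{u}$.

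Combining these pieces, the CBF requirement $\sup_{\mathbb{P}\in\mathcal{P}} \mathbb{P}\text{-CVaR}_{\varepsilon}[-h(x^+)] + \alpha h(x) \leq 0$ is implied by the quadratic inequality $\bar{u}^\top \bar{H}(x)\bar{u} + \bar{q}(x)^\top \bar{u} + \bar{r}(x) \leq 0$ of condition 1. The main delicate point will be sign-tracking: every use of sub-additivity and the $|E g(x) u| \leq v$ substitution must run in the loosening direction, and this is also the reason the theorem delivers only a sufficient condition, in contrast to the necessary-and-sufficient results of Theorems \ref{thm:hs} and \ref{thm:poly}, since both steps introduce slack that a necessary condition could not accommodate. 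Once that bookkeeping is in place, the identification with \eqref{eq:ell-const2} is by inspection.
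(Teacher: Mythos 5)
Your proposal is correct and follows essentially the same route as the paper's proof: the same expansion of $-h(x^+)$, the same use of translation invariance and sub-additivity from Proposition \ref{prop:coh} to isolate the $u$-dependent linear-in-$w$ cross term, the same application of Lemma \ref{lem:CVaR_bd} followed by the slack variable $v$ with $-v\leq Eg(x)u\leq v$, and the same assembly into \eqref{eq:ell-const}--\eqref{eq:ell-const2}. Your explicit appeal to Corollary \ref{cor:CVaR_cross2} to justify that replacing $|Eg(x)u|$ by $v$ only loosens the bound is a small clarification the paper leaves implicit, but it does not change the argument.
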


\begin{proof}
Using Proposition \ref{prop:coh} and Lemmas \ref{lem:CVaR_cross} and \ref{lem:CVaR_bd}, it follows that
\begin{align}\begin{aligned}
&\sup_{\mathbb{P}\in \mathcal{P}}\mathbb{P}\text{-CVaR}_{\varepsilon}[-h(x^+) ]+ \alpha h (x)\\
=&\sup_{\mathbb{P}\in \mathcal{P}}\mathbb{P}\text{-CVaR}_{\varepsilon}[(f(x)+g(x)u+w)^\top E(f(x)+g(x)u+w)\\
& \qquad-  r] - \alpha (x^\top Ex - r)\\
= &\sup_{\mathbb{P}\in \mathcal{P}}\mathbb{P}\text{-CVaR}_{\varepsilon}[w^\top E w+2(f(x)+g(x)u )^\top Ew  ]\\
 &\qquad+(f(x)+g(x)u )^\top E (f(x)+g(x)u )^\top \\
  &\qquad -  r - \alpha (x^\top Ex  - r)\\
\leq &\sup_{\mathbb{P}\in \mathcal{P}}\mathbb{P}\text{-CVaR}_{\varepsilon}[w^\top E w+(2Ef(x) )^\top w  ]\\
 &\qquad+ \sup_{\mathbb{P}\in \mathcal{P}}\mathbb{P}\text{-CVaR}_{\varepsilon}[(2Eg(x)u )^\top w  ]\\
 &\qquad+ u^\top g^\top(x) E g(x)u + (2g^\top(x) E f(x) )^\top u  \\
  &\qquad +  f^\top (x) E f(x) - r - \alpha (x^\top Ex - r)\\
\leq& \sup_{\mathbb{P}\in \mathcal{P}}\mathbb{P}\text{-CVaR}_{\varepsilon}[w^\top E w+(2Ef(x) )^\top w  ]\\
 &\qquad+2v^\top \sup_{\mathbb{P}\in \mathcal{P}}\mathbb{P}\text{-CVaR}_{\varepsilon}[w  ]\\
 &\qquad+ u^\top g^\top(x) E g(x)u + (2g^\top(x) E f(x)  )^\top u  \\
  &\qquad +  f^\top (x) E f(x) - r - \alpha (x^\top Ex  - r),
\end{aligned}\end{align}
where
\begin{align}\begin{aligned}
-v\leq   Eg(x)u   \leq v. \label{eq:remove_abs2}
\end{aligned}\end{align}
Thus, a sufficient condition for 
 \begin{align}\begin{aligned}
&\sup_{\mathbb{P}\in \mathcal{P}}\mathbb{P}\text{-CVaR}_{\varepsilon}[-h(x^+) ]+ \alpha h (x)\leq 0
\end{aligned}\end{align}
is to satisfy
 \begin{align}\begin{aligned}
&\sup_{\mathbb{P}\in \mathcal{P}}\mathbb{P}\text{-CVaR}_{\varepsilon}[w^\top E w+(2Ef(x) )^\top w  ]\\
 &\qquad+2v^\top \sup_{\mathbb{P}\in \mathcal{P}}\mathbb{P}\text{-CVaR}_{\varepsilon}[w  ]\\
 &\qquad+ u^\top g^\top(x) E g(x)u + (2g^\top(x) E f(x)  )^\top u  \\
  &\qquad +  f^\top (x) E f(x) - r - \alpha (x^\top Ex  - r) \leq 0 
\end{aligned}\end{align}
and \eqref{eq:remove_abs2}.
Those can be expressed as in \eqref{eq:ell-const}-\eqref{eq:ell-const2}.
\end{proof}

\begin{rem}
Unlike the previous two cases in Subsections \ref{sec:hs} and \ref{sec:poly}, the obtained condition \eqref{eq:ell-const}-\eqref{eq:ell-const2} is only sufficient for the safety condition \eqref{eq:CBF} is satisfied. This is because we utilize Lemma \ref{lem:CVaR_bd} to pull out the control input outside of the worst-case CVaR. 
Moreover, there is no guarantee that there is a feasible input that satisfies the safety constraint \eqref{eq:ell-const}.
As in the case of Subsection \ref{sec:poly}, if infeasible, we may want to introduce some penalty for the violation of this constraint, which is discussed in Section \ref{sec:cont}.
\end{rem}

\subsection{General safe set}\label{sec:gen}
For the general safe set, which is expressed by using a general function of $h(x)$, there is no simple way to obtain an equivalent or sufficient condition as the safety constraint \eqref{eq:CBF}. 
Similarly to \cite{CosCT23}, this motivates us to consider the relation between  $\sup_{\mathbb{P}\in \mathcal{P}}\mathbb{P}\text{-CVaR}_{\varepsilon}[-h(x^+)]$ and the function
\begin{align}
-h\left(\sup_{\mathbb{P}\in \mathcal{P}}\mathbb{P}\text{-CVaR}_{\varepsilon}[ {x}^+]\right),
\end{align}
which is likely to be more tractable.

Here, we restrict the case that $h$ is cocave in $x$ and that it satisfies
\begin{align}\label{eq:hessian}
-\underline{\sigma} I \leq \nabla^2 h(x), \ \forall x \in  \mathbb{R}^n
\end{align}
for some $\underline{\sigma} \geq 0$.

Let define
\begin{align}\begin{aligned} \label{eq:notation}
\bar{w} &= \sup_{\mathbb{P}\in \mathcal{P}}\mathbb{P}\text{-CVaR}_{\varepsilon}[ w],\\
\bar{x}^+ &= \sup_{\mathbb{P}\in \mathcal{P}}\mathbb{P}\text{-CVaR}_{\varepsilon}[ x^+]= f(x)+g(x)u+\bar{w},\\
z&= x^+ - \bar{x}^+  = w-\bar{w}.
\end{aligned}\end{align}

Then we have the following result.
\begin{lem}\label{lem:approx}It holds that
\begin{align}\begin{aligned} \label{eq:bound}
&\sup_{\mathbb{P}\in \mathcal{P}}\mathbb{P}\text{-CVaR}_{\varepsilon}[-h(x^+)]\leq \\
&-h(\bar{x}^+) +
\sup_{\mathbb{P}\in \mathcal{P}}\mathbb{P}\text{-CVaR}_{\varepsilon}[-\nabla h(\bar{x}^+)^\top z+ \frac{\underline{\sigma}}{2}z^\top z].
\end{aligned}\end{align}
\end{lem}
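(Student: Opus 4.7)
The plan is to reduce the bound to a deterministic inequality between $-h(x^+)$ and the right-hand side integrand, and then pass to the worst-case CVaR using the coherence properties listed in Proposition \ref{prop:coh}.

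First, I would exploit the Hessian lower bound \eqref{eq:hessian}. Since $-\underline{\sigma} I \preceq \nabla^2 h(x)$ for all $x\in\mathbb{R}^n$, a standard Taylor-with-remainder argument (or equivalently, the $\underline{\sigma}$-weak-concavity of $h$) yields the pointwise quadratic lower bound
\begin{align*}
h(y) \geq h(x) + \nabla h(x)^\top (y-x) - \frac{\underline{\sigma}}{2}\|y-x\|^2, \quad \forall x,y\in\mathbb{R}^n.
\end{align*}
Applying this with $x=\bar{x}^+$ and $y=x^+$, and negating, gives the key deterministic inequality
\begin{align*}
-h(x^+) \leq -h(\bar{x}^+) - \nabla h(\bar{x}^+)^\top z + \frac{\underline{\sigma}}{2} z^\top z,
\end{align*}
where $z = x^+ - \bar{x}^+ = w - \bar{w}$ as in \eqref{eq:notation}.

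Next, I would apply the worst-case CVaR to both sides. By monotonicity in Proposition \ref{prop:coh}, the direction of the inequality is preserved:
\begin{align*}
\sup_{\mathbb{P}\in\mathcal{P}} \mathbb{P}\text{-CVaR}_{\varepsilon}[-h(x^+)] \leq \sup_{\mathbb{P}\in\mathcal{P}} \mathbb{P}\text{-CVaR}_{\varepsilon}\!\left[-h(\bar{x}^+) - \nabla h(\bar{x}^+)^\top z + \tfrac{\underline{\sigma}}{2} z^\top z\right].
\end{align*}
Since $x$ and $u$ are treated as given and $\bar{w}$ is a fixed constant determined by the moment information on $w$, the quantity $\bar{x}^+$ is deterministic, and hence so is $-h(\bar{x}^+)$. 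Translation invariance then pulls this constant outside, yielding
\begin{align*}
\sup_{\mathbb{P}\in\mathcal{P}} \mathbb{P}\text{-CVaR}_{\varepsilon}[-h(x^+)] \leq -h(\bar{x}^+) + \sup_{\mathbb{P}\in\mathcal{P}} \mathbb{P}\text{-CVaR}_{\varepsilon}\!\left[-\nabla h(\bar{x}^+)^\top z + \tfrac{\underline{\sigma}}{2} z^\top z\right],
\end{align*}
which is precisely \eqref{eq:bound}.

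The only delicate point is the quadratic lower bound on $h$; I would justify it by the integral remainder form $h(y)-h(x)-\nabla h(x)^\top(y-x) = \int_0^1 (1-t)(y-x)^\top \nabla^2 h(x+t(y-x))(y-x)\,dt$ and then use $\nabla^2 h \succeq -\underline{\sigma}I$ to lower-bound the integrand by $-\tfrac{\underline{\sigma}}{2}\|y-x\|^2$. The remaining steps are direct applications of the already-established coherence properties of the worst-case CVaR, so no further obstacle is anticipated.
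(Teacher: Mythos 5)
Your proposal is correct and follows essentially the same route as the paper: a second-order Taylor expansion of $h$ around $\bar{x}^+$, the Hessian bound \eqref{eq:hessian} to control the remainder, and then monotonicity plus translation invariance from Proposition \ref{prop:coh}. The only cosmetic difference is that you justify the quadratic lower bound via the integral form of the remainder while the paper invokes the Lagrange form with an intermediate point $c$ on the segment joining $x^+$ and $\bar{x}^+$; both yield the identical pointwise inequality.
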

\begin{proof}
By Taylor's theorem, there exists some $c$ on the line segment connecting $x^+$ and $\bar{x}^+$ such that 
\begin{align}
h(x^+) - h(\bar{x}^+) = \nabla h(\bar{x}^+)^\top z+ \frac{1}{2}z^\top \nabla^2 h(c)z.
\end{align}

Under the assumption \eqref{eq:hessian}, it follows that
\begin{align}\begin{aligned}
-h(x^+)& = - h(\bar{x}^+) -\nabla h(\bar{x}^+)^\top z- \frac{1}{2}z^\top \nabla^2 h(c)z\\
&\leq - h(\bar{x}^+) -\nabla h(\bar{x}^+)^\top z+ \frac{\underline{\sigma}}{2}z^\top z.
\end{aligned}\end{align}
Thus, using Proposition \ref{prop:coh}, \eqref{eq:bound} is obtained.
\end{proof}

\begin{rem}
As $\varepsilon$ approaches to 1, this agrees with the result in \cite{CosCT23}.
\end{rem}

\begin{cor}\label{cor:general}
From Lemma \ref{lem:approx}, a sufficient condition that the constraint \eqref{eq:CBF} is satisfied is
\begin{align}\begin{aligned} \label{eq:general}
 - h(\bar{x}^+)+\sup_{\mathbb{P}\in \mathcal{P}}\mathbb{P}\text{-CVaR}_{\varepsilon}[  -\nabla h(\bar{x}^+)^\top z+ \frac{\underline{\sigma}}{2}z^\top z]\leq - \alpha h (x).
\end{aligned}\end{align}
\end{cor}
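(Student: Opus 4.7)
The plan is to prove Corollary~\ref{cor:general} by straightforward transitivity: the statement is an immediate consequence of chaining Lemma~\ref{lem:approx} with the assumed hypothesis \eqref{eq:general}. Concretely, I would assume that a control input $u$ has been chosen so that \eqref{eq:general} holds, and then invoke Lemma~\ref{lem:approx} to bound $\sup_{\mathbb{P}\in\mathcal{P}}\mathbb{P}\text{-CVaR}_\varepsilon[-h(x^+)]$ from above by
\[
-h(\bar{x}^+) + \sup_{\mathbb{P}\in\mathcal{P}}\mathbb{P}\text{-CVaR}_\varepsilon\!\left[-\nabla h(\bar{x}^+)^\top z + \frac{\underline{\sigma}}{2}z^\top z\right].
\]
Since the right-hand side is, by assumption, at most $-\alpha h(x)$, transitivity yields
\[
\sup_{\mathbb{P}\in\mathcal{P}}\mathbb{P}\text{-CVaR}_\varepsilon[-h(x^+)] \leq -\alpha h(x),
\]
which is exactly the safety constraint \eqref{eq:CBF} of Definition~\ref{defi:CBF}. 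Hence \eqref{eq:general} is a sufficient condition for \eqref{eq:CBF}, as claimed.

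There is no real technical obstacle here; the work has already been done in Lemma~\ref{lem:approx}, whose proof combined Taylor's theorem with the curvature bound \eqref{eq:hessian} and the monotonicity/translation-invariance properties of the worst-case CVaR from Proposition~\ref{prop:coh}. The only point worth emphasizing in the write-up is that the bound in Lemma~\ref{lem:approx} is one-sided (an upper bound on $\sup_{\mathbb{P}\in\mathcal{P}}\mathbb{P}\text{-CVaR}_\varepsilon[-h(x^+)]$), so the resulting condition is sufficient but not necessary for \eqref{eq:CBF}; this mirrors the remark made after Theorem~\ref{thm:ell} about the ellipsoidal case. Accordingly, I would phrase the proof as a two-line deduction rather than elaborate further.
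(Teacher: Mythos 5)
Your proof is correct and matches the paper's (implicit) argument exactly: the corollary follows by chaining the upper bound from Lemma \ref{lem:approx} with the assumed inequality \eqref{eq:general}, which is precisely why the paper states it without a separate proof. Your remark that the resulting condition is only sufficient, owing to the one-sided bound in Lemma \ref{lem:approx}, is also consistent with the paper's discussion.
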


\begin{rem}
The control input $u$ enters $h(\bar{x}^+)$ as well as $\nabla h(\bar{x}^+)$. 
Thus, still, the condition for $u$ to satisfy the safety constraint cannot be explicitly expressed. 
Nevertheless, we may check the feasibility of the condition \eqref{eq:general} once $u$ is given using the semidefinite programming:
Here, the substitution of \eqref{eq:notation} leads to 
\begin{align}\begin{aligned}
& \sup_{\mathbb{P}\in \mathcal{P}}\mathbb{P}\text{-CVaR}_{\varepsilon}\left[  -\nabla h(\bar{x}^+)^\top z+ \frac{\underline{\sigma}}{2}z^\top z\right]\\
=&\sup_{\mathbb{P}\in \mathcal{P}}\mathbb{P}\text{-CVaR}_{\varepsilon}\left[  -\nabla h(\bar{x}^+)^\top (w-\bar{w})+ \frac{\underline{\sigma}}{2}(w-\bar{w})^\top (w-\bar{w})\right]\\
=&\sup_{\mathbb{P}\in \mathcal{P}}\mathbb{P}\text{-CVaR}_{\varepsilon}\left[  \frac{\underline{\sigma}}{2} w^\top w -\left( \nabla h(\bar{x}^+) + \underline{\sigma}\bar{w}\right)^\top w 
\right.\\ & \left. \qquad\qquad\qquad\qquad\qquad\qquad\qquad
+ \frac{\underline{\sigma}}{2}\bar{w}^\top\bar{w} + \nabla h(\bar{x}^+)^\top \bar{w}  \right].
\end{aligned}\end{align}
Namely,  $-\nabla h(\bar{x}^+)^\top z+ \frac{\underline{\sigma}}{2}z^\top z$ is quadratic and convex with respect to $w$, thus the worst-case CVaR can be evaluated using semidefinite programming as in Lemma \ref{lem:CVaR_quadratic} once $x$ and $u$ are given.
\end{rem}


\section{Controller Design}  \label{sec:cont}
Given the derived safety constraints, our objective is to design controllers that minimally modify a nominal controller that does not take the safe set into account.
For this purpose, optimization-based control approaches are developed \cite{CosCT23},  \cite{AmeCE19}:
\begin{align} \label{eq:controller}
\begin{aligned} 
u ^*(x) &=  \text{argmin}_{u}  \|u-u_{\text{nom}}(x)\|^2\\
\text{s.t. }\ & \text{risk-aware safety constraint}
\end{aligned}
\end{align}

This controller ensures safety and strives for minimal point-wise divergence from the nominal controller $u_{\text{nom}}$ which does not take the safe set in consideration.

The following are specific optimization problems to obtain control inputs for each of the three safe sets.
We do not discuss the general case of safe set.

\subsubsection{Half-space safe set}\label{sec:hs_c}
If $h(x) = q^\top x+r$, $q\in\mathbb{R}^n$ and $r\in\mathbb{R}$, then using the result of Theorem \ref{thm:hs}, the control input $u ^*(x) $ is obtained by repeatedly solving
\begin{align} \label{eq:hs_c}
\begin{aligned} 
u ^*(x) &=  \text{argmin}_{u}  \|u-u_{\text{nom}}(x)\|^2\\
\text{s.t. }\ & -q^\top g(x)u \leq \phi(x),
\end{aligned}
\end{align}
where
\begin{align}
\phi (x) =q^\top ( f(x)-\alpha  x) +(1-\alpha)r - \sup_{\mathbb{P}\in \mathcal{P}}\mathbb{P}\text{-CVaR}_{\varepsilon}[q^\top w ].
\end{align}

This is a quadratic program and can be solved efficiently.
Moreover, the term $\sup_{\mathbb{P}\in \mathcal{P}}\mathbb{P}\text{-CVaR}_{\varepsilon}[q^\top w ]$ can be precomputed because it does not depend on $x$.
This term serves as a safety margin to avoid the potential risk of violation due to disturbances. This margin becomes smaller as we choose a larger $\varepsilon$, leading to a scenario where the disturbances are effectively disregarded. 

\subsubsection{Polytopic safe set}\label{sec:poly_c}
If $h(x) = Q^\top x+r$, $Q\in\mathbb{R}^{n\times m} $ and $r\in\mathbb{R}^m$, then using the result of Theorem \ref{thm:poly}, the control input is obtained by solving
\begin{align} \label{eq:poly_c}
\begin{aligned} 
u ^*(x) &=  \text{argmin}_{u}  \|u-u_{\text{nom}}(x)\|^2\\
\text{s.t. }\ & -Q^\top g(x)u \leq \phi(x),
\end{aligned}
\end{align}
where
\begin{align}
\phi (x) =Q^\top ( f(x)-\alpha  x) +(1-\alpha)r - \sup_{\mathbb{P}\in \mathcal{P}}\mathbb{P}\text{-CVaR}_{\varepsilon}[Q^\top w ].
\end{align}

As mentioned in Subsection \ref{sec:poly}, this optimization problem \eqref{eq:poly_c} can be infeasible, in which case, one way of revising it is 
\begin{align} \label{eq:poly_c_rev}
\begin{aligned} 
u ^*(x) &=  \text{argmin}_{u}  \|u-u_{\text{nom}}(x)\|^2 +\rho \delta \\
\text{s.t. }\ & -Q^\top g(x)u \leq \phi(x)+\delta, \\
& \delta \geq 0
\end{aligned}
\end{align}
by introducing a parameter $\rho > 0$. Other simple modifications would work as well.

In any case, both optimizations in \eqref{eq:poly_c} and \eqref{eq:poly_c_rev} are quadratic programs and can be solved efficiently.
\subsubsection{Ellipsoidal safe set}\label{sec:ell_c}
If $h(x) = - x^\top E x + r $ with $E= E^\top\succ 0$ and $r\in\mathbb{R}$, then using the result of Theorem \ref{thm:ell}, the control input is obtained by solving
\begin{align} \label{eq:ell_c}
\begin{aligned} 
u ^*(x) &=  \text{argmin}_{u}  \|u-u_{\text{nom}}(x)\|^2\\
\text{s.t. }\ & \bar{u}^\top \bar{H}(x)\bar{u} + \bar{q}^\top(x) \bar{u}+ \bar{r}(x)\leq 0,\\
& \bar{A}(x)\bar{u} \leq 0
\end{aligned}
\end{align}
using the expressions in \eqref{eq:ell-const}.

If infeasible, the optimization in \eqref{eq:ell_c} can be revised in a similar manner as in the polytopic safe sets
\begin{align} \label{eq:ell_c_rev}
\begin{aligned} 
u ^*(x) &=  \text{argmin}_{u}  \|u-u_{\text{nom}}(x)\|^2 +\rho \delta\\
\text{s.t. }\ & \bar{u}^\top \bar{H}(x)\bar{u} + \bar{q}^\top(x) \bar{u}+ \bar{r}(x)\leq \delta,\\
& \bar{A}(x)\bar{u} \leq 0,\\
& \delta \geq 0
\end{aligned}
\end{align}
by introducing a parameter $\rho > 0$.

The constraint is quadratic in this case, thus, the optimization problem \eqref{eq:ell_c_rev} is a quadratically constrained quadratic program.
However, because the objective function is convex and $\bar{H}(x)$ is positive semidefinite, this can be written as a semidefinite program which is tractable.

\section{Numerical Example}\label{sec:ex}

In this section, we evaluate the performance of the controllers developed in Section \ref{sec:cont}, through simulations on an inverted pendulum model. 
The discrete-time dynamics of this model around its upright equilibrium position is given by
\begin{align}\begin{aligned}\label{eq:pendulum}
\left[\begin{matrix}x_{t+1} \\y_{t+1} \end{matrix}\right]& = 
\left[\begin{matrix}x_{t} + y_t \Delta t  \\ y_{t} + \sin(x_t)\Delta t \end{matrix}\right]
+\left[\begin{matrix}0 \\ \Delta t  \end{matrix}\right] u_t + w_t,
 \end{aligned} \end{align}
 where $x_t$ is the angle from the upright position and $y_t$ is the angular velocity, $u_t$ is the control input and $w_t$ is the disturbance.
It is assumed that the disturbance is mean zero and that the covariance is $\Sigma_w= \left[\begin{matrix}0.001^2 & 0 \\ 0 & 0.003^2 \end{matrix}\right]$ and the sampling time $\Delta t = 0.01$. 

To quantify the uncertainty, the worst-case CVaR is used with $\varepsilon = 0.3$.
For the risk-aware safety constraint, $\alpha=0.8$ is chosen.

Take a nominal stabilizing controller 
\begin{align}\begin{aligned}
u_t= -x_t-\sin(x_t)-y_t, \label{eq:nominal_controller} 
 \end{aligned} \end{align}
which is obtained by feedback linearization.

In the followings, all simulations are performed for the duration of time 8, starting from the initial state $[0.3, 0.2]^\top$.

First, we compare the proposed controller with the nominal controller \eqref{eq:nominal_controller} using the three types of safe sets;
\begin{itemize}
\item Half-space: $h(x) = q^\top x + r$ with $q =\left[\begin{matrix}1.125 & 1\end{matrix}\right]^\top$ and $r = 0.075$
\item Polytope: $h(x) = Q^\top x + r$ with $q = \left[\begin{matrix}1.125& 1 \\ 0.5 & 1\end{matrix}\right]$ and $r = \left[\begin{matrix}0.075& 0.1\end{matrix}\right]^\top$
\item Ellipsoid: $h(x) = -x^\top E x + r$ with $E =\left[\begin{matrix}6& -5\\-5& 6\end{matrix}\right] $ and $r =1 $.
\end{itemize}
For the polytopic safe set,  it turns out that the optimization problem \eqref{eq:poly_c} is always feasible, so \eqref{eq:poly_c} is used instead of \eqref{eq:poly_c_rev}. 
For the ellipsoidal safe set, however, the optimization problem \eqref{eq:ell_c} is not always feasible, thus  \eqref{eq:ell_c_rev} is used.
For each of the three types of safe sets, the following three trajectories are compared.
\begin{itemize}
\item Nominal $w=0$: with the nominal controller \eqref{eq:nominal_controller} without disturbance
\item Proposed:  with the proposed risk-aware controller \eqref{eq:hs_c}, \eqref{eq:poly_c} or \eqref{eq:ell_c_rev} with $\rho = 500$ 
subject to Gaussian disturbance with the assumed mean and covariance
\item Proposed $w=0$: with the proposed risk-aware controller \eqref{eq:hs_c}, \eqref{eq:poly_c} or \eqref{eq:ell_c_rev} with $\rho = 500$
without disturbance
\end{itemize}

The results are shown in Figures \ref{fig:hs}-\ref{fig:ell}.

In all cases, the trajectories of the proposed controllers successfully stay inside the safe sets. 

Comparing the cases  ``Nominal $w=0$'' and ``Proposed $w=0$'',  we see that when the states are far from the boundaries of $h= 0$, the trajectories coincide, but as soon as the trajectories approach close to the boundary, there are clear differences; cases of ``Proposed $w=0$'' keep some distances from the boundaries; those distances are in fact, $\sup_{\mathbb{P}\in \mathcal{P}}\mathbb{P}\text{-CVaR}_{\varepsilon}[q^\top w ]$ and $\sup_{\mathbb{P}\in \mathcal{P}}\mathbb{P}\text{-CVaR}_{\varepsilon}[Q^\top w ]$ in the cases of half-space and polytopic safe sets, respectively. The case of an ellipsoidal safe set, the distance is not exactly known because the revised controller  \eqref{eq:ell_c_rev} does not guarantee the satisfaction of the safety constraint.

Comparing the cases  ``Proposed'' and ``Proposed $w=0$'',  we see the effects of the disturbances. 
The trajectories of ``Proposed'' deviate, but stay around ``Proposed $w=0$''. Thanks to the risk-aware design, the trajectories stay inside the safe sets. 

\begin{figure}
\begin{minipage}[b]{0.95\hsize}
 \centering
\includegraphics[width=.98\linewidth, viewport =10 0 430 310, clip]{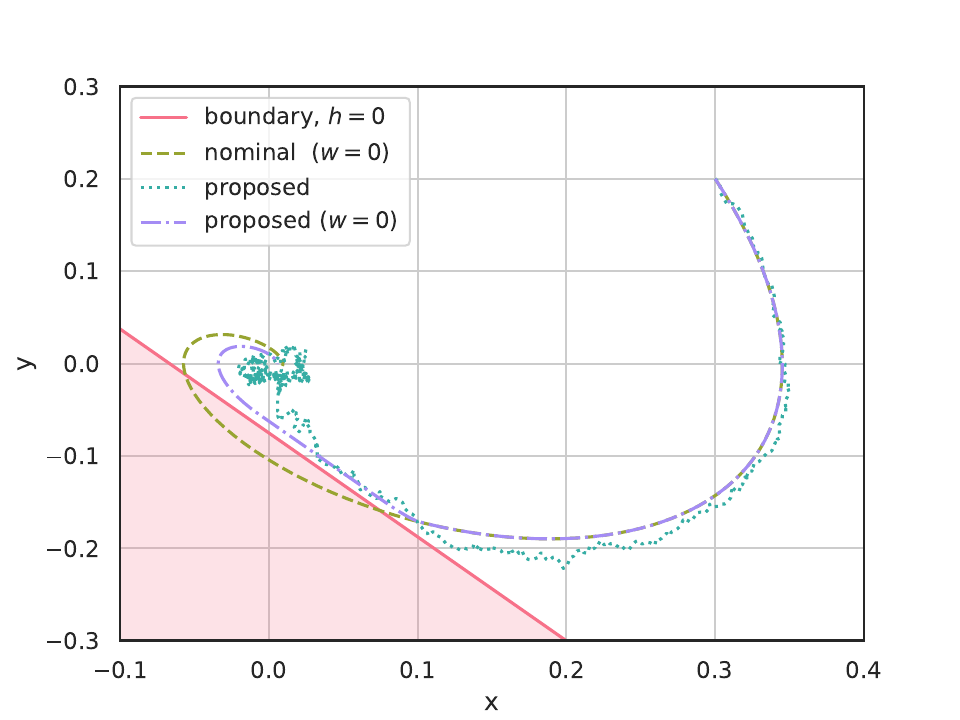}\vspace{-.1in}
\subcaption{Half-space safe set} 
\label{fig:hs}
\end{minipage}
\begin{minipage}[b]{0.95\hsize}
 \centering
  \includegraphics[width=.98\linewidth, viewport =10 0 430 310, clip]{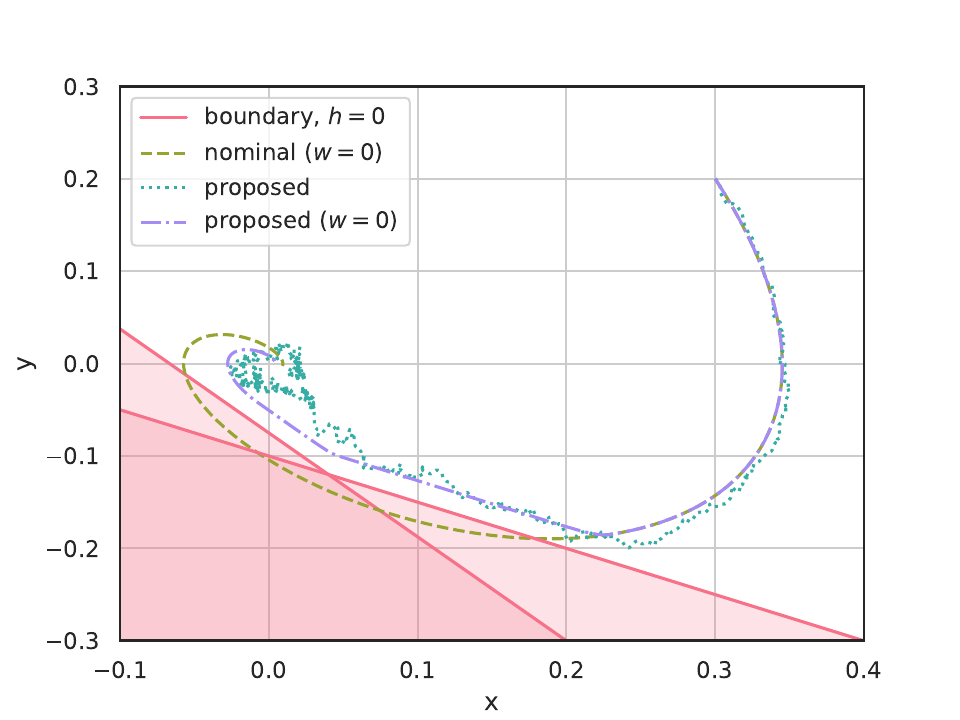}\vspace{-.1in}
\subcaption{Polytopic safe set} 
\label{fig:poly}
\end{minipage}
\begin{minipage}[b]{0.95\hsize}
 \centering
  \includegraphics[width=.98\linewidth, viewport =10 0 430 310, clip]{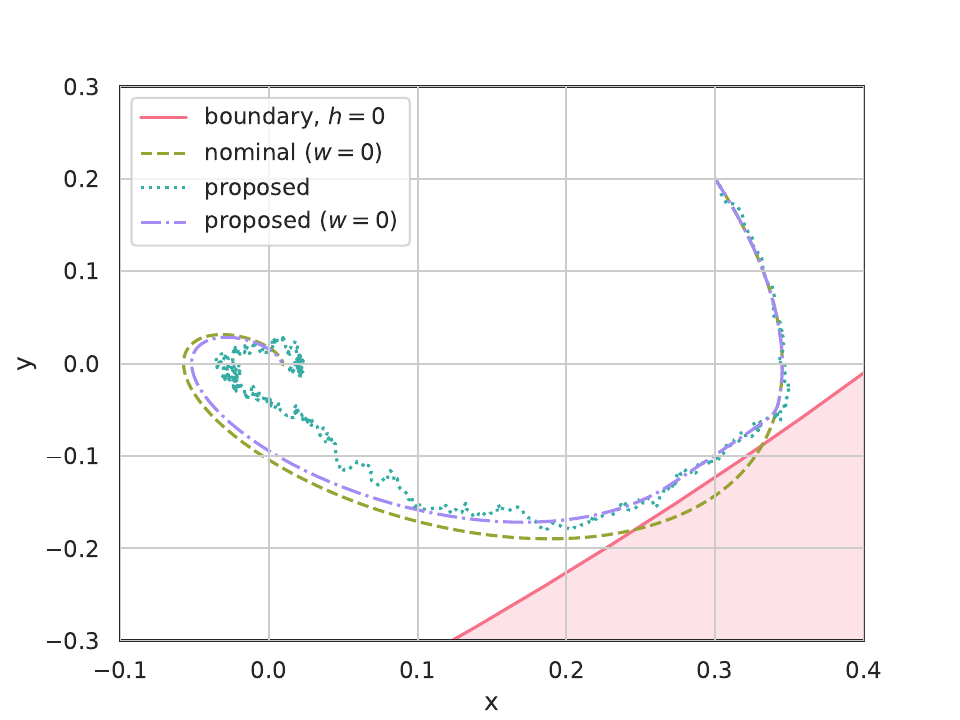}\vspace{-.1in}
\subcaption{Polytopic safe set} 
\label{fig:ell}
\end{minipage}
\caption{Phase portraits of performances of proposed controller. The area designated as unsafe is highlighted in red.}
\end{figure}

Next, the proposed controller is compared with the standard control-barrier function based controller for the polytopic safe set described above.
The standard controller basically sets $\sup_{\mathbb{P}\in \mathcal{P}}\mathbb{P}\text{-CVaR}_{\varepsilon}[Q^\top w ]=0$ in \eqref{eq:poly_c}, which is equivalent to using the expected value instead of the worst-case CVaR or disregarding the uncertainties in the controller design. The result is shown in Figure \ref{fig:polyb}. 
Since the trajectory of  ``Standard $w=0$'' case goes on the boundary, $h=0$,  the trajectory of ``Standard''  case enters the unsafe set quite often due to the disturbance. 
Thus, we see that considering the expected value for stochastic systems may not be suitable for safety-critical systems; the trajectory of the proposed approach may still enter the unsafe set, but its risk is quantified and limited in design.

\begin{figure}\centering
 \includegraphics[width=.98\linewidth, viewport =10 0 430 310, clip]{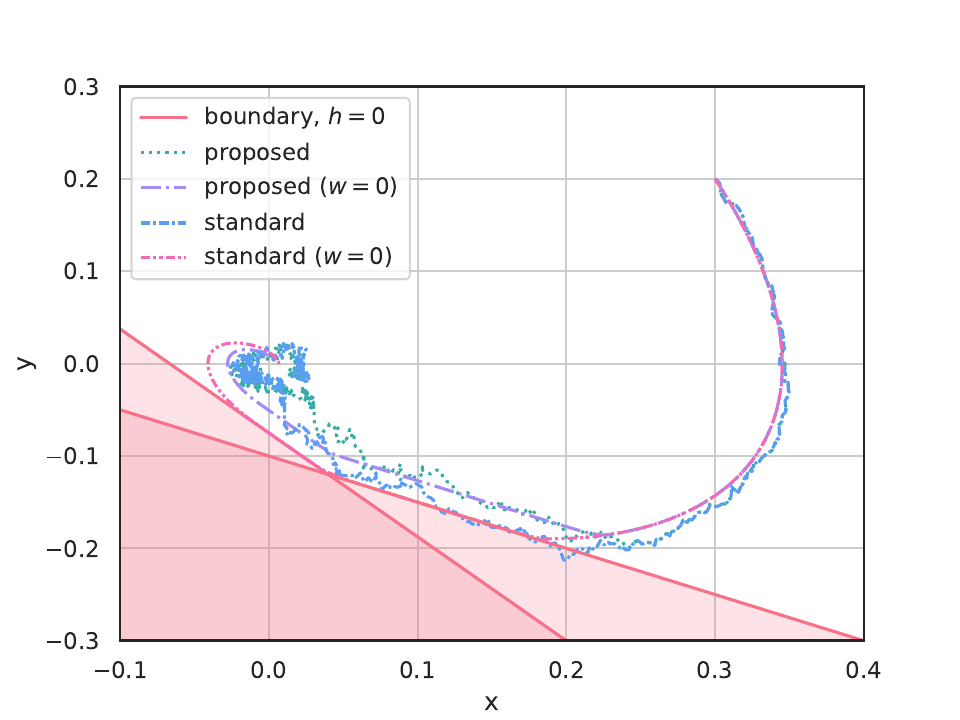}
\caption{Phase portraits of proposed controller and standard controller for polytopic safe set. The area designated as unsafe is highlighted in red.} 
\label{fig:polyb}\vspace{-.1in}
\end{figure}

\section{Conclusions}\label{sec:conc}
This paper presented a risk-aware control approach to enforce safety that integrates the worst-case CVaR with the control barrier function for discrete-time nonlinear systems with stochastic uncertainties.
The approach is based on some useful findings about the worst-case CVaR, and effectively integrates safe sets while considering the tail risk in the controller design. 
More specifically, we formulate specific computational problems to compute control inputs for three different safe sets: a quadratic program for half-space and polytopic sets and a semidefinite program for an ellipsoidal set.
Our validation with the inverted pendulum confirmed its effectiveness and demonstrated improved performance over existing methods. 
Future work should explore extensions of event- and self-triggered controllers to reduce resource usage as discussed in \cite{Kis23b}.

\addtolength{\textheight}{-7cm}   







\bibliographystyle{IEEEtran}
\bibliography{IEEEabrv,myref}

\end{document}